\documentclass{amsart}
\usepackage{a4wide}
\usepackage{tikz-cd}
\usepackage{enumerate}
\usepackage[utf8]{inputenc}
\usepackage{color}
\usepackage[linkbordercolor=red, citebordercolor=green, pdfborderstyle={/S/U/W 1}]{hyperref}

\hypersetup{%
     pdfborder = {1 1 0}
}

\newcommand{\FF}{\mathbb F}

\newcommand{\PP}{\mathbb P}
\newcommand{\A}{\mathbb A}
\newcommand{\Q}{\mathbb Q}
\newcommand{\sL}{\mathcal L}

\renewcommand{\O}{\mathcal O}

\newcommand\Pic{\mathop{\rm Pic} \nolimits}

\newcommand\Gal{\mathop{\rm Gal} \nolimits}

\newcommand\Sym{\mathop{\rm Sym} \nolimits}

\newcommand\kbar{\overline{k}}

\newtheorem{theorem}{Theorem}[section]

\newtheorem{lemma}[theorem]{Lemma}

\theoremstyle{remark}
\newtheorem{remark}[theorem]{Remark}

\begin{document}

\begin{abstract}
We prove that every del Pezzo surface of degree two over a finite field is unirational, building on the work of Manin and an extension by Salgado, Testa, and V\'arilly-Alvarado, who had proved this for all but three surfaces. Over general fields of characteristic not equal to two, we state sufficient conditions for a del Pezzo surface of degree two to be unirational. 
\end{abstract}

\title[Unirationality of del Pezzo surfaces of degree two]{Unirationality of del Pezzo surfaces of degree two over finite fields}
\author{Dino Festi, Ronald van Luijk}
\date{}
\maketitle
\section{Introduction}
A \textit{del Pezzo surface} is a smooth, projective, geometrically integral variety $X$ of which the anticanonical divisor $-K_X$ is ample. 
We define the \textit{degree} of a del Pezzo surface $X$ as the self intersection number of $K_X$, that is, $\deg X = K_X^2$. If $k$ is an algebraically closed field, then every del Pezzo surface of degree $d$ over $k$ is isomorphic to $\PP^1 \times \PP^1$ (with $d=8$), or to $\PP^2$ blown up in $9-d$ points in general position. 

Over arbitrary fields, the situation is more complicated and del Pezzo surfaces need not be birationally equivalent with $\PP^2$. We therefore look at the weaker notion of unirationality.
We say that a variety $X$ of dimension $n$ over a field $k$ is \textit{unirational} if there exists a dominant rational map $\PP^n \dashrightarrow X$, defined over $k$.
We prove the following theorem. 

\begin{theorem}\label{Thm:main}
Every del Pezzo surface of degree 2 over a finite field is unirational.
\end{theorem}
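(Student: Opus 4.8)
The plan is to combine the existing results with an explicit analysis of finitely many remaining surfaces. Recall that over a field of characteristic not two a del Pezzo surface $X$ of degree $2$ is realized by its anticanonical map as a double cover $\pi \colon X \to \PP^2$ branched over a smooth plane quartic $C$, so that $X$ is cut out by an equation $w^2 = F(x,y,z)$ in the weighted projective space $\PP(1,1,1,2)$ with $F$ a quartic form, and the deck transformation is the Geiser involution $\sigma$. Manin's strategy for unirationality starts from a rational point $Q \in X(k)$ whose image $\pi(Q)$ avoids $C$: the preimages $\pi^{-1}(\ell)$ of the lines $\ell$ through $\pi(Q)$ form a pencil of genus-one curves, all passing through both $Q$ and $\sigma(Q)$, which after blowing up these two base points becomes an elliptic fibration $\mathcal E \to \PP^1$ with $Q$ as origin and $\sigma(Q)$ as a second section. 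Using the fibrewise group law together with this extra section, a sufficiently general $Q$ yields a dominant rational map from a rational surface, and hence from $\PP^2$, onto $X$. Salgado, Testa, and V\'arilly-Alvarado made the genericity condition on $Q$ completely explicit and, via point counting and the Weil bounds, verified that such a $Q$ exists on every del Pezzo surface of degree two over a finite field with the exception of three surfaces over small fields.

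My first step is therefore to invoke their theorem to reduce Theorem~\ref{Thm:main} to the unirationality of those three exceptional surfaces. Since each exception arises precisely because its set of rational points is too sparse to meet the complement of the locus on which Manin's construction degenerates, I do not expect a uniform argument to apply; instead the plan is to treat each surface by a tailored construction. Concretely, for each of the three I would write down its defining quartic $F$ over the relevant finite field, exhibit a suitable auxiliary structure on $X$ (for instance a conic-bundle fibration, or a low-degree rational curve that sweeps out $X$ as an explicit parameter varies), and from it assemble an explicit dominant rational map $\PP^2 \dashrightarrow X$.

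The main obstacle is exactly the construction and verification of these explicit maps. The difficulty is twofold: first, producing any candidate parametrization when no rational point in general position is available; and second, proving that the resulting map is genuinely dominant rather than having image contained in a curve. For the latter I would control the image of a general line under the constructed map, showing it is an irreducible plane quartic, or that it spans a surface of the expected minimal degree, so that it cannot lie in a proper subvariety; these irreducibility and minimal-degree statements are the auxiliary facts that must be established and form the technical heart of the argument. Finally I would check that the three constructions, together with the cited theorem, account for every del Pezzo surface of degree two over every finite field, including in characteristic two, where the double-cover model must be replaced by its Artin--Schreier analogue.
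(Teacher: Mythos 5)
Your first step --- invoking the Salgado--Testa--V\'arilly-Alvarado theorem to reduce to three explicit exceptional surfaces --- is exactly what the paper does, and your worry about characteristic two is moot at that point: the three exceptions all live over $\FF_3$ or $\FF_9$, so the Artin--Schreier model never enters. The genuine gap is in everything after the reduction. You propose to build, for each exceptional surface, an explicit dominant rational map $\PP^2 \dashrightarrow X$ and then to prove dominance by controlling images of lines; you yourself flag this as the unresolved ``technical heart,'' and indeed no candidate maps are produced and no mechanism is given for producing them when, as you note, the surfaces are exceptional precisely because they lack rational points in general position. As written, the proposal is a plan for the hard part rather than a proof of it.

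The missing idea that collapses this difficulty is \cite[Theorem~17]{STVA13} (Lemma~\ref{P1Lemma1} in the paper): outside characteristic $2$, a del Pezzo surface of degree $2$ that admits a nonconstant morphism $\PP^1 \to X$ is automatically unirational. This converts the task from ``construct and verify a dominant map from a surface'' to ``exhibit a single rational curve,'' which is one dimension easier and requires no dominance argument at all --- the sweeping-out is done once and for all inside the lemma (via the Geiser involution and the structure of $|-2K_X|$), not surface by surface. The paper then simply writes down explicit polynomial parametrisations $\rho_i\colon \PP^1 \to X_i$ (of degrees up to $16$) for each of the three surfaces and checks by substitution that they satisfy the defining equations; these curves were found by first locating plane curves meeting the branch quartic everywhere with even multiplicity (the content of Theorem~\ref{C:NonRam}(2) and Remark~\ref{ex:quarticcurves}). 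Your phrase ``a low-degree rational curve that sweeps out $X$ as a parameter varies'' gestures in this direction, but without the lemma you would still have to carry out the sweeping-out and dominance verification yourself, and without the explicit curves the argument does not close.
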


The analog for higher degree holds over any field. 
Works of B.~Segre, Yu.~Manin, J.~Koll\'ar, M.~Pieropan, and A.~Knecht prove that every del Pezzo surface of degree $d\geq 3$, defined over any field $k$, is unirational, provided that the set $X(k)$ of rational points is non-empty. For references,  see \cite{Seg43, Seg51} for $k=\Q$ and $d=3$, see \cite[Theorem 29.4 and 30.1]{Man86} for $d\geq 3$ with the extra assumption for $d\in \{3,4\}$ that~$k$ has enough elements. 
See \cite[Theorem 1.1]{Kol02} for $d=3$ and a general ground field. The earliest reference we could find for $d=4$ and a general ground field is \cite[Proposition 5.19]{Pie12}. Independently, for $d=4$, \cite[Theorem 2.1]{Kne13} covers all finite fields. 
Since all del Pezzo surfaces over finite fields have a rational point (see \cite[Corollary 27.1.1]{Man86}), this implies that  every del Pezzo surface of degree at least $3$ over a finite field is unirational. 

Most of the work to prove Theorem~\ref{Thm:main} was already done. 
Building on work by Manin (see \cite[Theorem 29.4]{Man86}), C.~Salgado, D.~Testa, and A.~V\'arilly-Alvarado prove that all 
del Pezzo surfaces of degree $2$ over a finite field are unirational, except possibly for three isomorphism classes 
of surfaces (see \cite[Theorem 1]{STVA13}). 
In Section~\ref{S:pfmain}, we will present the three difficult surfaces and 
show that these are also unirational, thus proving Theorem~\ref{Thm:main}. 

Before that, in Section~\ref{S:basic}, we will recall the basics about del Pezzo surfaces of degree $2$, including the fact that 
the linear system associated to the anti-canonical divisor induces a finite morphism to $\PP^2$ of degree $2$. 
We call this morphism the {\em anti-canonical morphism} associated to $X$. This allows us to state the second main theorem. 

\begin{theorem}\label{C:NonRam}
Suppose $k$ is a field of characteristic not equal to $2$. 
Let $X$ be a del Pezzo surface of degree~$2$ over~$k$, and let $\pi\colon X \to \PP^2$ be 
its anti-canonical morphism. Assume that $X$ has a $k$-rational point, say $P$. 
Let $C\subset \PP^2$ be a geometrically integral curve over $k$ of degree $d \geq 2$ and 
suppose that $\pi(P)$ is a point of multiplicity $d-1$ on $C$. Suppose, moreover, 
that $C$ intersects the branch locus $B$ of the morphism $\pi$ with even multiplicity everywhere. Then the following statements hold.
\begin{enumerate}
\item If $\pi(P)$ is not contained in $B$, then $X$ is unirational.
\item If $\pi(P)$ is contained in $B$, and it is an ordinary singular point on $C$ and we have  $d\in \{3,4\}$, then there 
exists a field extension $\ell$ of $k$ of degree at most $2$ for which the preimage $\pi^{-1}(C_\ell)$ 
is birationally equivalent with $\PP^1_\ell$;  for each such field $\ell$, the surface $X_\ell$ is unirational.
\end{enumerate}
\end{theorem}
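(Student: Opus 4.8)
The plan is to reduce both parts to the single geometric task of exhibiting a curve on $X$ (over $k$ in part (1), over $\ell$ in part (2)) that is birationally equivalent to $\PP^1$ and passes through $P$, and then to sweep out the surface by moving that rational curve. Throughout I write $Q=\pi(P)$, let $B\subset\PP^2$ be the branch quartic, and let $\sigma$ be the Bertini involution, that is, the deck transformation of the double cover $\pi$. The central object is the restricted double cover $\pi^{-1}(C)\to C$, which is ramified exactly over $C\cap B$. Since $C$ carries a point of multiplicity $d-1$, projection from $Q$ realizes $C$ as a rational curve, so I would pass to the normalization $\nu\colon\PP^1\to C$, study the induced double cover $\widehat{C}\to\PP^1$, and compute its geometric genus by Riemann--Hurwitz.

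For part (1), where $Q\notin B$, the singular point of $C$ is disjoint from the branch locus, so $C$ is smooth along $C\cap B$ and the even-intersection hypothesis says that the pullback of $C\cap B$ to $\PP^1$ is an even divisor. A double cover with everywhere-even branching is \'etale (locally $w^2=t^{2m}u$ gives $(w/t^m)^2=u$), so $\widehat{C}\to\PP^1$ is an unramified double cover of $\PP^1$ and therefore splits into two disjoint copies of $\PP^1$ over $\kbar$. The rational point $P$ lies over $Q\notin B$, hence is one of two distinct points of the fibre and lies on exactly one component; as $P$ is fixed by $\Gal(\kbar/k)$, that component $\Gamma$ is Galois-stable, hence defined over $k$, and being a genus-zero curve carrying the rational point $P$ it satisfies $\Gamma\cong\PP^1_k$.

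For part (2) the analysis at $Q\in B$ is the heart of the matter and the step I expect to be hardest. Now $\pi$ ramifies above $Q$, and because $Q$ is an ordinary point of multiplicity $d-1$ the normalization separates the $d-1$ smooth branches; over each branch the cover ramifies precisely when that branch meets the smooth curve $B$ with odd multiplicity. Away from $Q$ the even-intersection hypothesis again kills all ramification, so the genus of $\widehat{C}$ is governed entirely by the branches at $Q$: if $2s$ of them meet $B$ oddly, Riemann--Hurwitz gives $g(\widehat{C})=s-1$. The total local intersection $I_Q(C,B)$ is even by hypothesis, so the number of odd branches is even, and a short case check shows that for an ordinary singularity this number equals two exactly in the borderline range $d\in\{3,4\}$ allowed by the evenness constraint (for $d=3$ the two transverse branches; for $d=4$ the evenness forces a tangency, leaving two transverse branches), whereas $d\ge 5$ forces $g\ge 1$. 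Thus $d\in\{3,4\}$ yields $g(\widehat{C})=0$, and since the cover is ramified it is connected, so $\widehat{C}$ is a geometrically integral curve of genus zero. The two distinguished branches meeting $B$ oddly form a $\Gal(\kbar/k)$-stable subscheme of degree two of the tangent cone, so they become rational over a field $\ell/k$ of degree at most two; over $\ell$ the corresponding two ramification points are $\ell$-rational, which endows $\widehat{C}_\ell$ with an $\ell$-point and hence $\widehat{C}_\ell\cong\PP^1_\ell$, that is, $\pi^{-1}(C_\ell)$ is birationally equivalent to $\PP^1_\ell$. This is the first assertion of (2).

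It then remains to pass from the rational curve $\Gamma$ through the rational point $P$ to unirationality of the ambient surface, by a sweeping argument in the spirit of Manin (\cite[Theorem 29.4]{Man86}). Since $-K_X=\pi^*\O(1)$ and $\pi^{-1}(C)$ has class $-dK_X$, one computes $-K_X\cdot\Gamma=d$ in case (1) and $-K_X\cdot\Gamma=2d$ in case (2); in either case $-K_X\cdot\Gamma>0$, so $\Gamma$ deforms in a family of rational curves through $P$ that dominates $X$. Concretely, when $\Gamma$ is smooth with $\Gamma^2=0$ (the case $d=2$ of part (1)) the linear system $|\Gamma|$ is a base-point-free pencil exhibiting $X$ as a conic bundle over $\PP^1$, and $\sigma(\Gamma)$ is a rational multisection defined over the base field; pulling the conic bundle back along $\sigma(\Gamma)\cong\PP^1$ produces a tautological section, so the total space is rational and maps dominantly to $X$. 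In general the covering family of rational curves through $P$, defined over the base field together with the section supplied by $P$, yields a dominant rational map from a rational variety, proving that $X$ (respectively $X_\ell$) is unirational. The principal obstacle is the local branch analysis of part (2) sketched above, which simultaneously produces the quadratic field $\ell$ and pins down the restriction $d\in\{3,4\}$.
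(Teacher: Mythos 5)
Your analysis of the induced double cover of $\PP^1$ --- normalizing $C$ by projection from $Q=\pi(P)$, using the everywhere-even intersection with $B$ to kill all ramification away from $Q$, separating the branches at the ordinary singular point, and running the parity count plus Riemann--Hurwitz to get an unramified cover in case (1) and a connected genus-zero cover with exactly two ramification points for $d\in\{3,4\}$ in case (2) --- is exactly the paper's argument and is essentially correct. Two small remarks: for $d=3$ you cannot assume both branches are transverse to $B$ (one may be tangent to $B$ with odd contact order), but the parity statement you give already forces both branches to meet $B$ oddly, which is all that is needed; and your construction of $\ell$ from the Galois-stable degree-two set of ramification points is a valid, somewhat more explicit variant of the paper's appeal to the fact that a geometrically integral genus-zero curve acquires a point over an extension of degree at most two.

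The genuine gap is in your final paragraph, the passage from a rational curve $\Gamma\subset X$ to unirationality of $X$. The paper does not reprove this step: it invokes \cite[Theorem~17]{STVA13} (Lemma~\ref{P1Lemma1} in the text), which states precisely that a del Pezzo surface of degree $2$ over a field of characteristic $\neq 2$ admitting a nonconstant morphism $\PP^1\to X$ is unirational. Your substitute --- ``$-K_X\cdot\Gamma>0$, so $\Gamma$ deforms in a family of rational curves through $P$ that dominates $X$'' --- is not a proof and is false as a general principle: an exceptional curve $E$ has $-K_X\cdot E=1>0$ and does not move at all, and even when $\Gamma$ does move over $\kbar$, the whole difficulty over a finite (or any non-closed) field is to produce a dominating family that is defined over $k$, parametrized by a $k$-rational base, and equipped with a $k$-rational total space; positivity of the anticanonical degree gives none of this. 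Your conic-bundle discussion covers only the special case $\Gamma^2=0$, $-K_X\cdot\Gamma=2$, and the sentence beginning ``In general the covering family of rational curves through $P$\dots'' simply asserts the conclusion that needs proving. You should either cite \cite[Theorem~17]{STVA13} for this step, as the paper does, or supply its proof, which is a nontrivial construction in its own right and cannot be dispatched by a deformation-theoretic slogan.
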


The main tool for the proof of both theorems is Lemma~\ref{P1Lemma1} (that is,~\cite[Theorem~17]{STVA13}), which states that, outside characteristic $2$, a del Pezzo surface of degree $2$
is unirational if it contains a rational curve.  We prove Theorem~\ref{C:NonRam} in Section~\ref{S:pfmaintwo} by showing that, under the hypotheses of Theorem~\ref{C:NonRam}, 
the pull-back of the curve $C$ to $X$ contains a rational component. 
Manin's original construction, and the generalisation by Salgado, Testa, and V\'arilly-Alvarado, 
produces a rational curve that corresponds to case (1) of Theorem~\ref{C:NonRam}, with $4-d$ equal to the number of exceptional curves that $P$ lies on. 

For the three difficult surfaces one can use case (2) of Theorem~\ref{C:NonRam} (see Remark~\ref{ex:quarticcurves}). 
Here we benefit from the fact that if $k$ is a finite field,  then any curve that becomes birationally equivalent with $\PP^1$ over an extension of $k$, already is 
birationally equivalent with $\PP^1$ over $k$ itself.

For interesting examples and more details about 
the proof of Theorem~\ref{C:NonRam}, Manin's construction, as wel as a generalisation of Theorem~\ref{C:NonRam}, 
we refer the reader to an extended version of this paper \cite{FvL14}.

The authors would like to thank Bjorn Poonen, Damiano Testa and Anthony V\'arilly-Alvarado for useful conversations.

\section{Del Pezzo surfaces of degree two}\label{S:basic}

The statements in this section are well known and we will use them freely.  
Let $X$ be a del Pezzo surface of degree $2$ over a field $k$ with canonical divisor~$K_X$.
The Riemann-Roch spaces $\sL(-K_X)$ and $\sL(-2K_X)$ have dimension $3$ and $7$, respectively. 
Let $x,y,z$ be generators of $\sL(-K_X)$ and choose an element $w \in \sL(-2K_X)$ that is not contained 
in the image of the natural map $\Sym^2 \sL(-K_X) \to \sL(-2K_X)$. 
Then $X$ embeds into the weighted projective space $\PP= \PP(1,1,1,2)$ with coordinates $x,y,z$, and $w$.
We will identify $X$ with its image in $\PP$, which is a smooth surface of degree~$4$. 
Conversely, every smooth surface of degree~$4$ in $\PP$ is a del Pezzo surface of 
degree~$2$. There are homogeneous polynomials
$f,g \in k[x,y,z]$ of degrees $2$ and $4$, respectively, such that $X \subset \PP$ is given by 
\begin{equation}\label{eq:geneq}
w^2+fw=g.
\end{equation}
If the characteristic of $k$ is not $2$, then after completing the square on the left-hand side, 
we may assume $f=0$. For more details and proofs of these facts, see 
\cite[Section~III.3, Theorem~III.3.5]{Kol96} and \cite[Section IV.24]{Man86}.

The restriction to $X$ of the $2$-uple embedding 
$\PP\to \PP^6$ corresponds to the complete linear system $|-2K_X|$.
Every hyperplane section of $X \subset \PP$ is linearly equivalent with~$-K_X$. 
The projection $\PP \dashrightarrow \PP^2$ onto the first three coordinates
restricts to a finite, separable morphism $\pi_X\colon X \to \PP^2$ of degree $2$, which 
corresponds to the complete linear system $|-K_X|$. This is the anti-canonical morphism
mentioned in the introduction.

The morphism $\pi_X$ is ramified above the branch locus $B_X \subset \PP^2$ given by $f^2+4g=0$. 
If the characteristic of $k$ is not $2$, then $B_X$ is a smooth curve. 
We denote the ramification locus $\pi^{-1}(B_X)$ of $\pi_X$ by~$R_X$. 
As for every double cover, the morphism $\pi_X$ induces
an involution $\iota_X\colon X \to X$ that sends a point $P\in X$ to the unique second point in 
the fiber $\pi_X^{-1}(\pi_X(P))$, or to $P$ itself if $\pi_X$ is ramified at~$P$. 
If $X$ is clear from the context, then we sometimes leave out the subscripts and write 
$\pi, \iota$, $B$, and $R$ for $\pi_X, \iota_X, B_X$, and $R_X$, respectively.

\section{Proof of the first main theorem}\label{S:pfmain}

Set $k_1=k_2=\FF_3$ and $k_3 = \FF_9$. Let $\gamma \in k_3$ denote an element satisfying $\gamma^2=\gamma+1$.
Note that $\gamma$ is not a square in $k_3$. 
For $i\in \{1,2,3\}$, we define the surface $X_i$ in $\PP=\PP(1,1,1,2)$ with coordinates $x,y,z,w$ over~$k_i$ by  
\begin{align*}
X_1 \, \colon \; -w^2 \; &= \; (x^2+y^2)^2+y^3z-yz^3,\\
X_2 \, \colon \; -w^2 \; &= \; x^4+y^3z-yz^3,\\
X_3 \, \colon \; \gamma w^2 \; &= \; x^4+y^4+z^4.
\end{align*}
These surfaces are smooth, so they are del Pezzo surfaces of degree $2$.  
C.\ Salgado, D.\ Testa, and A.\ V\'arilly-Alvarado proved the following result.

\begin{theorem}\label{TSVAfinite}
Let $X$ be a del Pezzo surface of degree 2 over a finite field. If $X$ is not isomorphic to 
$X_1, X_2$, and $X_3$, then $X$ is unirational. 
\end{theorem}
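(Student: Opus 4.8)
The plan is to reduce the problem to the construction of a single rational curve on each surface, and then to produce such a curve from a well-chosen rational point. By Lemma~\ref{P1Lemma1}, in characteristic not equal to $2$ a del Pezzo surface of degree $2$ is unirational as soon as it contains a rational curve, so it suffices to exhibit one. Every del Pezzo surface over a finite field has a rational point $P$, and the natural source of rational curves is the anti-canonical double cover $\pi\colon X\to\PP^2$: if $C\subset\PP^2$ is a \emph{rational} plane curve through $\pi(P)$ that meets the branch quartic $B$ with even multiplicity everywhere, then the cover $\pi^{-1}(C)\to C$ acquires ramification only over the singularities of $C$, and one checks that $\pi^{-1}(C)$ contains a rational component. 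This is precisely the mechanism of Theorem~\ref{C:NonRam}(1), so the whole problem becomes the production, over $k=\FF_q$, of a suitable curve $C$.

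I would organise the construction according to the position of $P$ relative to the $56$ exceptional curves, taking $C$ of degree $d$ with a point of multiplicity $d-1$ at $\pi(P)$, where $d=4$ minus the number of exceptional curves through $P$; such a $C$ is automatically rational, being parametrised by the pencil of lines through its point of maximal multiplicity. The generic case is a point $P$ lying on no exceptional curve with $\pi(P)\notin B$, for which one seeks a plane quartic with a triple point at $\pi(P)$ that is tangent to $B$ at each of its remaining intersection points. The heart of the matter is to show that, after imposing the triple point together with the tangency (even-intersection) conditions, the resulting linear system still contains a curve defined over $k$ with the required geometric behaviour; Manin's original construction does exactly this and furnishes the rational component.

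To finish I would run this construction uniformly over all finite fields by a point-counting argument. The loci where the construction fails---$P$ lying on a ramification or exceptional curve in a forbidden way, or the relevant quartic degenerating---are closed subsets of bounded degree, while the Weil bounds give $|X(\FF_q)|=q^2+O(q)$; hence for all but finitely many small $q$ there is a rational point $P$ in the required general position, and the construction applies. The main obstacle is precisely this borderline of very small fields, where the point count is too coarse to guarantee a point avoiding all the bad loci: a finite, explicit search over the surviving surfaces over $\FF_3$ and $\FF_9$ is then needed. It is exactly here that the three surfaces $X_1,X_2,X_3$ remain as the cases not settled by the general argument, and disposing of them is deferred to Section~\ref{S:pfmain} via case~(2) of Theorem~\ref{C:NonRam}.
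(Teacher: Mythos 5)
The paper does not prove this statement at all: its ``proof'' is the single line ``See \cite[Theorem 1]{STVA13}'', i.e.\ the theorem is imported wholesale from Salgado--Testa--V\'arilly-Alvarado. Your sketch does correctly identify the strategy of that cited work (Manin's construction of a degree-$d$ curve with a point of multiplicity $d-1$ at $\pi(P)$, with $4-d$ the number of exceptional curves through $P$; a counting argument for large $q$; explicit searches for small $q$), and it is consistent with the summary the present paper gives in its introduction. So as a description of ``how this theorem is actually proved'' it is on target.

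Judged as a proof, however, it has genuine gaps at every load-bearing point. First, the central existence statement --- that over $k=\FF_q$ there is a geometrically integral curve $C$ through $\pi(P)$ with the prescribed multiple point \emph{and} everywhere-even intersection with $B$ --- is exactly the hard content, and you dispose of it with ``Manin's original construction does exactly this''; nothing in your text constructs $C$ or verifies that the linear system in question is nonempty over $k$ and generically well-behaved. Second, the point-counting step is asserted qualitatively ($|X(\FF_q)|=q^2+O(q)$ versus ``bad loci of bounded degree'') with no effective bound on $q$, so it cannot by itself isolate which small fields survive. Third, and most importantly for this particular statement, the conclusion is that the \emph{only} failures are the three named surfaces $X_1,X_2,X_3$; your argument ends by asserting that ``a finite, explicit search'' would leave exactly these three, but that search is the actual proof of the theorem as stated and is not performed. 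In short: correct roadmap of \cite{STVA13}, but the theorem's defining content (the exhaustive classification of the exceptions) is not established.
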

\begin{proof}
See \cite[Theorem 1]{STVA13}. 
\end{proof}

We will use the following lemma to prove the complementary statement, 
namely that $X_1, X_2$, and $X_3$ are unirational as well. 

\begin{lemma}\label{P1Lemma1}
Let $X$ be a del Pezzo surface of degree 2 over a field~$k$. 
Suppose that $\rho \colon \PP^1 \to X$ is a nonconstant morphism; if the characteristic of $k$ is $2$ and the 
image of $\rho$ is contained in the ramification divisor $R_X$, then 
assume also that the field $k$ is perfect. Then $X$ is unirational.
\end{lemma}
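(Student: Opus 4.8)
The plan is to produce a dominant rational map to $X$ from a rational surface, which for a surface is equivalent to unirationality. Write $C$ for the closure of the image of $\rho$, a rational curve on $X$. The first step is a dichotomy according to whether $C$ is contained in the ramification divisor $R_X$ or not. In characteristic different from $2$ the restriction $\pi|_{R_X}\colon R_X \to B_X$ is an isomorphism onto the smooth plane quartic $B_X$, which has genus $3$ and therefore admits no nonconstant morphism from $\PP^1$; hence $C \not\subseteq R_X$ automatically, and the extra hypothesis in the statement is needed only to control the wildly ramified situation in characteristic $2$, where $R_X$ can itself be rational. I would treat that borderline case separately at the end, using perfectness of $k$ to split the inseparable part of $\pi|_C$ and descend the relevant Frobenius twist.

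For the main case $C \not\subseteq R_X$, the involution $\iota$ acts nontrivially in a neighbourhood of $C$, and the idea is to spread $C$ out into a family of rational curves sweeping out $X$. The device is the double-cover structure: the preimage under $\pi$ of a line $L \subset \PP^2$ is a curve of arithmetic genus $1$, branched over the four points $L \cap B_X$, and it degenerates to a \emph{rational} curve exactly when $L$ meets $B_X$ with even multiplicity everywhere, for instance when $L$ is tangent to $B_X$ (this is the mechanism that reappears in Theorem~\ref{C:NonRam}). A clean incarnation is a chord-type construction in the spirit of the Segre--Manin argument for cubic surfaces: for $(s,t)\in\PP^1\times\PP^1$ take the points $P=\rho(s)$ and $P'=\rho(t)$, the line $\ell$ through $\pi(P)$ and $\pi(P')$, and the genus-one curve $E=\pi^{-1}(\ell)$, which carries the four rational points $P,P',\iota P,\iota P'$; using the group law on $E$ with one of these as origin defines a point $\Phi(s,t)\in X$ and hence a rational map $\Phi\colon\PP^1\times\PP^1\dashrightarrow X$. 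Because $\rho$, $\iota$, and the chosen origin are all defined over $k$, this $\Phi$ is automatically a $k$-rational map, so the field-of-definition bookkeeping is free; if instead one prefers a family of tangent-line preimages, one must take a little more care to anchor the rational base and the origin on the $k$-rational data supplied by $C$ and $\iota C$.

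The main obstacle, in either incarnation, is \emph{dominance}: one must rule out that the constructed family of rational curves is degenerate, i.e. that its total image is a curve rather than all of $X$. I expect to verify this by a differential computation at a general pair $(s,t)$ showing that moving $s$ and $t$ independently moves $\Phi(s,t)$ in two independent tangent directions, equivalently that $\Phi$ has generically full-rank differential; this is the step that genuinely uses that $C$ is a curve not contained in a fibre of $\pi$ and not pointwise fixed by $\iota$, which is precisely what $C \not\subseteq R_X$ guarantees. Once dominance is established over $k$, the dominant rational map from the rational surface $\PP^1\times\PP^1$ yields the unirationality of $X$, and it remains only to return to the excluded characteristic-$2$ case $C\subseteq R_X$, where the group-law argument breaks down and the perfectness hypothesis is invoked to handle the inseparable behaviour of $\pi$ along the ramification.
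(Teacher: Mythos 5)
The paper never proves this lemma: it is quoted from Salgado--Testa--V\'arilly-Alvarado and the ``proof'' in the text is just the citation to \cite[Theorem~17]{STVA13}. So the only real question is whether your sketch would close the statement on its own, and it would not. The most concrete problem is that your chord construction breaks down whenever $\pi(C)$ is a line, and the hypotheses allow this: $\rho$ may parametrise an exceptional curve $E$ of $X$ (such a curve, when defined over $k$, is isomorphic to $\PP^1_k$), and then $\pi(E)$ is a bitangent line $L$ of $B$. In that case the ``line through $\pi(P)$ and $\pi(P')$'' is the fixed line $L$ for every $(s,t)$, the preimage $\pi^{-1}(L)=E\cup\iota(E)$ is not a smooth genus-one curve, and $\Phi$ maps into a fixed curve rather than dominating $X$. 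More generally $\pi|_C$ can be $2:1$ onto a line or a conic, and these degenerate configurations have to be addressed, not just the ``general pair of chords'' picture.

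Beyond that, the two steps that carry the entire weight of the lemma are left as declarations of intent. Dominance is asserted as something you ``expect to verify by a differential computation''; but dominance of exactly this kind of add-points-on-a-moving-genus-one-curve construction is the delicate part of Manin's method --- it is what forces the general-position hypotheses in the pointwise version and what makes the three surfaces of Section~3 exceptional in \cite{STVA13} --- so it cannot be waved through by genericity; it must either be computed or replaced by a structural argument (for instance, base-changing to the infinite field $k(\PP^1)$ and exploiting that the generic point of $C$ is a rational point of $X_{k(\PP^1)}$ in good position). Likewise, the characteristic-$2$ case with $C\subseteq R_X$ receives no argument at all: ``split the inseparable part and descend the Frobenius twist'' names a mechanism without showing that it produces a dominant map from a rational surface. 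Your opening observation --- that in characteristic $\neq 2$ the curve $R_X\cong B_X$ is a smooth plane quartic of genus $3$, so $C\subseteq R_X$ cannot occur --- is correct and useful, but as written the proposal is a plan with its decisive steps missing rather than a proof.
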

\begin{proof}
See \cite[Theorem 17]{STVA13}.
\end{proof}

For $i \in \{1,2,3\}$, we define a morphism $\rho_i \colon \PP^1 \to X_i$ by 
extending the map $\A^1(t) \to X_i$ given by  
$$
t \mapsto (x_i(t):y_i(t):z_i(t):w_i(t)),
$$ 
where 
\begin{center}
\begin{minipage}{2in}
\begin{align*}
x_1(t)&=t^2 (t^2-1) ,\\
y_1(t)&=t^2 (t^2-1)^2,\\
z_1(t)&=t^8 -t^2 + 1,\\
w_1(t)&=t(t^2-1)(t^4+1)(t^8+1),
\end{align*}
\end{minipage}
\begin{minipage}{1.8in}
\begin{align*}
x_2(t)&=t(t^2+1)(t^4-1),\\
y_2(t)&=-t^4,\\
z_2(t)&=t^8+1,\\
w_2(t)&=t^2(t^2 + 1)(t^{10}-1),
\end{align*}
\end{minipage}
\begin{minipage}{1.8in}
\begin{align*}
x_3(t)&=(t^4+1)(t^2-\gamma^3),\\
y_3(t)&=(t^4-1)(t^2+\gamma^3),\\
z_3(t)&=(t^4+\gamma^2)(t^2-\gamma),\\
w_3(t)&=\gamma^2t(t^8-1)(t^2+\gamma).
\end{align*}
\end{minipage}
\end{center}
\smallskip

It is easy to check for each $i$ that the morphism $\rho_i$ is well defined, that is, the polynomials $x_i,y_i,z_i$, and $w_i$ satisfy the equation of $X_i$, and that $\rho_i$ is non-constant. 

\begin{theorem}\label{Thm:X123uni}
The del Pezzo surfaces $X_1, X_2$, and $X_3$ are unirational.
\end{theorem}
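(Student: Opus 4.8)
The plan is to apply Lemma~\ref{P1Lemma1} to the three morphisms $\rho_1,\rho_2,\rho_3$ constructed just above, so that no new geometric input is required beyond verifying that these maps do what is claimed. Each $X_i$ is a del Pezzo surface of degree~$2$ over a field $k_i$ of characteristic~$3$; in particular $\charac k_i \neq 2$, so the extra perfectness hypothesis attached to the characteristic-$2$ case of Lemma~\ref{P1Lemma1} is vacuous here. It therefore suffices to confirm, for each $i$, that $\rho_i$ is a well-defined nonconstant morphism $\PP^1 \to X_i$; unirationality of $X_i$ then follows at once.

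First I would check well-definedness, i.e.\ that the image of $\rho_i$ really lies on $X_i$. This amounts to a single weighted-homogeneous polynomial identity in $t$, obtained by substituting $(x,y,z,w)=(x_i(t),y_i(t),z_i(t),w_i(t))$ into the defining equation of $X_i$. For $i=1$ one verifies
\[
-w_1(t)^2 \;=\; \bigl(x_1(t)^2+y_1(t)^2\bigr)^2 + y_1(t)^3 z_1(t) - y_1(t)\,z_1(t)^3
\]
as an identity in $\FF_3[t]$, and similarly $\gamma\,w_3(t)^2 = x_3(t)^4+y_3(t)^4+z_3(t)^4$ in $\FF_9[t]$ for $i=3$, together with the analogous identity for $i=2$. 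Each is a comparison of two polynomials of bounded degree over a small finite field (the relevant degrees are $30$ for $X_1$ and $22$ for $X_3$, the latter after the three leading $t^{24}$-terms cancel because $3=0$), so the check reduces to a finite, mechanical comparison of coefficients; one can even sanity-check the top coefficients, e.g.\ for $X_3$ the leading coefficient $\gamma^5=2\gamma=-\gamma$ of $\gamma\,w_3(t)^2$ matches that of the right-hand side.

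Next I would confirm that $\rho_i$ is genuinely a morphism on all of $\PP^1$ and is nonconstant. On the open locus of $\A^1$ where $(x_i,y_i,z_i)\neq(0,0,0)$, the assignment $t\mapsto(x_i(t):y_i(t):z_i(t):w_i(t))$ directly yields a point of $\PP(1,1,1,2)$ lying on $X_i$, hence a rational map $\PP^1\dashrightarrow X_i$. Since $\PP^1$ is a smooth projective curve and $X_i$ is projective, this rational map automatically extends to a morphism $\rho_i\colon\PP^1\to X_i$, so the behaviour at the finitely many excluded points, including $t=\infty$, needs no separate analysis. Nonconstancy is immediate from the coordinate ratios: for instance $y_1/x_1=t^2-1$ is nonconstant, and likewise for $i=2,3$.

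With each $\rho_i$ thus established as a nonconstant morphism $\PP^1\to X_i$, Lemma~\ref{P1Lemma1} gives that every $X_i$ is unirational, which proves the theorem. I expect no genuine obstacle in this argument: the polynomial identities are routine once one trusts the listed formulas. The real difficulty lies upstream and has already been resolved, namely in producing the parametrizations $\rho_i$; these are guided by case~(2) of Theorem~\ref{C:NonRam} applied with $d=4$ to a plane quartic having a triple point at $\pi(P)$, and the present theorem merely harvests the resulting rational curve on each $X_i$.
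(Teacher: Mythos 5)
Your proposal is correct and follows exactly the paper's own argument: verify that each $\rho_i$ is a well-defined nonconstant morphism $\PP^1\to X_i$ by checking the polynomial identity, then invoke Lemma~\ref{P1Lemma1} (noting the characteristic-$2$ caveat is vacuous in characteristic $3$). Your additional remarks on extending the rational map from $\A^1$ to all of $\PP^1$ and the sanity checks on leading coefficients are sound but only flesh out what the paper dismisses as "easy to check."
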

\begin{proof}
By Lemma~\ref{P1Lemma1}, the existence of $\rho_1, \rho_2$, and $\rho_3$ implies that $X_1,X_2$, and $X_3$ are unirational.
\end{proof}

\begin{proof}[Proof of Theorem \ref{Thm:main}]
This follows from Theorems \ref{TSVAfinite} and~\ref{Thm:X123uni}.
\end{proof}

\section{Proof of the second main theorem}\label{S:pfmaintwo}

If $C$ is a plane curve with an ordinary singularity $Q$ and $\tilde{C}$ is the normalisation of $C$, 
then we can think of the points of $\tilde{C}$ above $Q$ as corresponding with the branches of $C$ 
through $Q$. The intersection multiplicity of $C$ with another plane curve $B$ at $Q$ is then the sum 
of the intersection multiplicities of $B$ with all the branches of $C$ through $Q$. This point of view 
is used in the following proof. For more technical details about this approach, see the extended version 
of this paper \cite{FvL14}.

\begin{proof}[Proof of Theorem \ref{C:NonRam}]
Let $\iota\colon X \to X$ denote the involution associated to the double cover $\pi$.
Set $Q = \pi(P)$. 
Projection away from the point $Q\in C \subset \PP^2$ yields a birational map $C \dashrightarrow \PP^1$ whose
inverse $\vartheta \colon \PP^1 \to C$ can be identified with the normalisation map of $C$. The
map $\vartheta$ restricts to an isomorphism $\PP^1 \setminus \vartheta^{-1}(Q) \to C \setminus \{Q\}$, 
and $C$ is smooth away from $Q$. 
Let $D = \pi^{-1}(C)$ be the inverse image of $C$ under~$\pi$, and let $\tilde{D}$ be its normalisation. 
Then $\pi$ induces a double cover $\tilde{\pi} \colon \tilde{D} \to \PP^1$. 

Let $S \in \PP^1$ be a point and set $T = \vartheta(S)\in C$. 
The curve $B$ is given locally around $T$ by the vanishing of a rational function on $\PP^2$
that is regular at $T$. We let $h$ denote the image of such a function in the local ring $\O_{C,T}$ of 
$T$ in $C$. 

If $T \neq Q$, then $T$ is a smooth point of $C$, so the ring $\O_{C,T}=\O_{\PP^1,S}$ is a discrete valuation ring. 
In this case, the valuation of $h$ equals the intersection multiplicity of $B$ and $C$ at $T$, which is even.  
Since the characteristic of $k$ is not $2$, this implies that adjoining a square root 
of $h$ to $\O_{C,T}$ yields an unramified extension, so the morphism $\tilde{\pi}\colon \tilde{D} \to \PP^1$ 
is not ramified above $S$ when $T\neq Q$. 

Suppose that $Q$ is not contained in $B$. Then for $T=Q$, the element $h$ is a unit in the local ring $\O_{C,T}$,
and therefore also in the ring extension $\O_{\PP^1,S}$. Hence, as before, since the characteristic of $k$ is not $2$, 
this implies that the morphism $\tilde{\pi}$ is not ramified above $S$. This means that $\tilde{\pi}$ is unramified. 
Since $\PP^1_{\kbar}$ has no nontrivial unramified covers, this means that the curve $\tilde{D}$, and hence the curve 
$D \subset X$, splits into two 
components over some quadratic extension $\ell$ of $k$. Exactly one of the components of $D$ contains the rational 
point $P$ and the other component contains $\iota(P)$. This implies that the Galois group $\Gal(\ell/k)$ 
sends each component to itself, 
so these components are defined over $k$. Each maps isomorphically to~$C$, so $X$ contains a curve that is 
birationally equivalent to $\PP^1$ and therefore $X$ is unirational by Lemma~\ref{P1Lemma1}. This proves (1). 

Suppose that $Q$ is contained in $B$ and that it is an ordinary singular point on $C$. Then 
$\vartheta^{-1}(Q)$ consists of exactly $d-1$ points over $\kbar$, each corresponding to the
tangent direction of one of the $d-1$ branches of $C$ at $Q$. At most one of tangent directions 
is tangent to $B$, so at least $d-2$ of the branches intersect $B$ with multiplicity $1$. The total intersection 
multiplicity of $B$ and $C$ at~$Q$ is even. If $d$ is odd, then the contribution $(d-2)\cdot 1$ 
of the $d-2$ branches with intersection multiplicity $1$ is odd, so the last branch intersects $B$ with 
odd multiplicity as well; hence all $d-1$ branches intersect $B$ with odd multiplicity, which implies 
that $\tilde{\pi} \colon \tilde{D} \to \PP^1$ is ramified above all $d-1$ points above $Q$. If $d$ is even, 
then the contribution of the $d-2$ branches of $C$ that intersect~$B$ with multiplicity $1$ is even as well, 
so the last branch intersects $B$ with even multiplicity; as before, this means that $\tilde{\pi}$ is not 
ramified above the point in $\vartheta^{-1}(Q) \subset \PP^1$ that corresponds to this last branch, 
so $\tilde{\pi}$ is ramified above exactly $d-2$ of the $d-1$ points above $Q$. For $d \in \{3,4\}$, these two 
cases ($d$ odd or even) imply that 
the map $\tilde{\pi} \colon \tilde{D} \to \PP^1$ is ramified at exactly two points, so $\tilde{D}$ is a geometrically 
integral curve of genus $0$ by the theorem of Riemann-Hurwitz. Indeed, this implies that there
is a field extension $\ell$ of $k$ of degree at most $2$ for which $\tilde{D}_\ell$, and thus 
$D_\ell = \pi^{-1}(C_\ell)$, is birationally equivalent with~$\PP^1_\ell$. For each such field, 
the surface $X_\ell$ is unirational by Lemma~\ref{P1Lemma1}. This proves (2).  
\end{proof}

\begin{remark}\label{ex:quarticcurves}
Let the surfaces $X_1,X_2,X_3$ and the morphisms $\rho_1,\rho_2,\rho_3$ be as in the previous section. 
Take any $i \in \{1,2,3\}$.  
Set $A_i=\rho_i(\PP^1)$ and $C_i = \pi_i(A_i)$, where $\pi_i = \pi_{X_i} \colon X_i \to \PP^2$ is as 
described in the previous section.
By Remark 2 of \cite{STVA13}, the surface $X_i$ is minimal, and the Picard group 
$\Pic X_i$ is generated by the class of the anticanonical divisor $-K_{X_i}$.
The same remark states that the linear system $|-nK_{X_i}|$ does not contain a geometrically integral 
curve of geometric genus zero for $n\leq 3$ if $i\in \{1,2\}$, nor for $n\leq 2$ if $i=3$. 
For $i \in \{1,2\}$, the curve $A_i$ has degree~$8$, so it is contained in the linear system $|-4K_{X_i}|$.
The curve $A_3$ has degree~$6$, so it is contained in the linear system $|-3K_{X_i}|$. 
This means that the curve $A_i$ has minimal degree among all rational curves on $X_i$.
The restriction of $\pi_i$ to $A_i$ is a double cover $A_i \to C_i$. 
The curve $C_i \subset \PP^2$ has degree~$4$ for $i \in \{1,2\}$ and degree $3$ for $i=3$, 
and $C_i$ is given by the vanishing of $h_i$, with
\begin{align*}
h_1=&x^4 + x y^3 + y^4 - x^2 y z - x y^2 z,\\
h_2=&x^4 - x^2 y^2 - y^4 + x^2 y z + y z^3, \\
h_3=&x^2y + xy^2 + x^2z - xyz + y^2z - xz^2 - yz^2 - z^3.
\end{align*} 
For $i \in \{1,2\}$, the curve $C_i$ has an ordinary triple point $Q_i$, with $Q_1 = (0:0:1)$, $Q_2 = (0:1:1)$. 
The curve $C_3$ has an ordinary double point at $Q_3=(1:1:1)$. 
For all $i$, the point $Q_i$ lies on the branch locus $B_i = B_{X_i}$.   

Using the polynomial $h_i$, one can check that the curve $C_i$ intersects the branch locus $B_i$ with even multiplicity everywhere. 
In fact, had we {\em defined}
$C_i$ by the vanishing of $h_i$, then one would easily check that $C_i$ satisfies the conditions of 
part (2) of Theorem~\ref{C:NonRam}.
This gives an alternative proof of unirationality of $X_i$ without the
need of the explicit morphism $\rho_i$; here we may
use the fact that if $k$ is a finite field, 
then any curve that becomes birationally equivalent to $\PP^1$ over an extension of~$k$, already is birationally equivalent with $\PP^1$ over $k$.
Indeed, in practice we first found 
the curves $C_1$, $C_2$, and $C_3$, and then constructed the parametrisations $\rho_1,\rho_2,\rho_3$, 
which allow for the more direct proof that we gave of Theorem~\ref{Thm:X123uni} in the previous section.
\end{remark}

\bibliographystyle{alpha}
\bibliography{dP2sBiblio}{}

\begin{thebibliography}{STVA14}

\bibitem[FvL14]{FvL14}
Dino Festi and Ronald van Luijk.
\newblock Unirationality of del {P}ezzo surfaces of degree two over finite
  fields (extended version).
\newblock {\em preprint, {\rm available at \texttt{arXiv:1408.0269}}}, 2014.

\bibitem[Kne13]{Kne13}
A.~Knecht.
\newblock Degree of unirationality for del {P}ezzo surfaces over finite fields.
\newblock {\em preprint, {\rm available at
  \texttt{http://www67.homepage.villanova.edu/amanda.knecht/UnirationalDegree.pdf}}},
  2013.

\bibitem[Kol96]{Kol96}
J.~Koll{\'a}r.
\newblock {\em Rational curves on algebraic varieties}, volume~32 of {\em
  Ergebnisse der Mathematik und ihrer Grenzgebiete. 3. Folge. A Series of
  Modern Surveys in Mathematics [Results in Mathematics and Related Areas. 3rd
  Series. A Series of Modern Surveys in Mathematics]}.
\newblock Springer-Verlag, Berlin, 1996.

\bibitem[Kol02]{Kol02}
J.~Koll{\'a}r.
\newblock Unirationality of cubic hypersurfaces.
\newblock {\em J. Inst. Math. Jussieu}, 1(3):467--476, 2002.

\bibitem[Man86]{Man86}
Yu.~I. Manin.
\newblock {\em Cubic forms}, volume~4 of {\em North-Holland Mathematical
  Library}.
\newblock North-Holland Publishing Co., Amsterdam, second edition, 1986.
\newblock Algebra, geometry, arithmetic, Translated from the Russian by M.
  Hazewinkel.

\bibitem[Pie12]{Pie12}
M.~Pieropan.
\newblock On the unirationality of del {P}ezzo surface over an arbitrary field.
\newblock {\em Master thesis, {\rm available at
  \texttt{http://www.algant.eu/documents/theses/pieropan.pdf}}}, 2012.

\bibitem[Seg43]{Seg43}
B.~Segre.
\newblock A note on arithmetical properties of cubic surfaces.
\newblock {\em Journal of the London Mathematical Society}, 1(1):24--31, 1943.

\bibitem[Seg51]{Seg51}
B.~Segre.
\newblock On the rational solutions of homogeneous cubic equations in four
  variables.
\newblock {\em Math. Notae}, 11:1--68, 1951.

\bibitem[STVA14]{STVA13}
C.~Salgado, D.~Testa, and A.~V\'arilly-Alvarado.
\newblock On the unirationality of del {P}ezzo surfaces of degree two.
\newblock {\em Journal of the London Mathematical Society}, 90:121--139, 2014.

\end{thebibliography}

\end{document}